\newtheorem {problem}       {Problem}
\newtheorem {property}      {Property}
\newtheorem {theorem}{Theorem}
\newtheorem {lemma}{Lemma}
\theoremstyle{plain}
\newtheorem {remark}{Remark}
\theoremstyle{plain}
\theoremstyle{plain}
\newtheorem *{proof}{Proof:}
\newcommand\VEC{\bm}
\newcommand\ALPHABET{\mathcal}
\newcommand\BLANK{\mathfrak {E}}
\newcommand\ASU{\mathrm{SU}}
\newcommand\IND{\mathds{1}}
\newcommand\PR {\mathds{P}}
\newcommand\EXP{\mathds{E}}
\newcommand\reals{\mathds{R}}
\newcommand\DEFINED{\coloneqq}
\begin{document}

\title {Structure of optimal strategies for remote
estimation over Gilbert-Elliott channel with feedback}

\author{\IEEEauthorblockN{Jhelum Chakravorty}
\IEEEauthorblockA{Electrical and Computer Engineering\\
McGill University,
Montreal, Canada\\
Email: jhelum.chakravorty@mail.mcgill.ca}
\and
\IEEEauthorblockN{Aditya Mahajan}
\IEEEauthorblockA{Electrical and Computer Engineering\\
McGill University,
Montreal, Canada\\
Email: aditya.mahajan@mcgill.ca}
}

% Topic (for ISIT submission): Estimation and Detection

\maketitle

\begin{abstract}
We investigate remote estimation over a Gilbert-Elliot channel with feedback.
We assume that the channel state is observed by the receiver and fed back to
the transmitter with one unit delay. In addition, the transmitter gets
\textsc{ack}/\textsc{nack} feedback for successful/unsuccessful transmission.
Using ideas from team theory, we establish the structure of optimal
transmission and estimation strategies and identify a dynamic program to
determine optimal strategies with that structure. We then consider
first-order autoregressive sources where the noise process has unimodal and
symmetric distribution. Using ideas from majorization theory, we show that
the optimal transmission strategy has a threshold structure and the optimal
estimation strategy is Kalman-like.
\end{abstract}

\section{Introduction}

\subsection{Motivation and literature overview}

We consider a remote estimation system in which a sensor/transmitter
observes a first-order Markov process and causally decides which observations
to transmit to a remotely located receiver/estimator. Communication is
expensive and takes place over a Gilbert-Elliot channel (which is used to
model channels with burst erasures). The channel has two states: \textsc{off}
state and \textsc{on} state. When the channel is in the \textsc{off} state, a
packet transmitted from the sensor to the receiver is dropped. When the
channel is in the \textsc{on} state, a packet transmitted from the sensor to
the receiver is received without error. We assume that the channel state is
causally observed at the receiver and is fed back to the transmitter with
one-unit delay. Whenever there is a successful reception, the receiver sends
an acknowledgment to the transmitter. The feedback is assumed to be noiseless. 

At the time instances when the receiver does not receive a packet (either
  because the sensor did not transmit or because the transmitted packet was
dropped), the receiver needs to estimate the state of the source process.
There is a fundamental trade-off between communication cost and
estimation accuracy. Transmitting all the time minimizes the estimation error
but incurs a high communication cost; not transmitting at all minimizes the
communication cost but incurs a high estimation error. 

The motivation of remote estimation comes from networked control systems. The
earliest instance of the problem was perhaps considered by Marschak~\cite{marschak1954}
in the context of information gathering in organizations. In recent years,
several variations of remote estimation has been considered. These include
models that consider idealized channels without packet drops~\cite{ImerBasar, Rabi2012, XuHes2004a, LipsaMartins:2011, NayyarBasarTeneketzisVeeravalli:2013, MH2012,
johanssonTAC13,  shi2015event} and models that consider
channels with i.i.d.\ packet drops~\cite{JC_AM_IFAC16, Shietal_arxiv}. 

The salient features of remote estimation are as follows:
\begin{enumerate}
  \item[(F1)] The decisions are made sequentially.
  \item[(F2)] The reconstruction/estimation at the receiver must be done
    with zero-delay.
  \item[(F3)] When a packet does get through, it is received without noise. 
\end{enumerate}
Remote estimation problems may be viewed as a special case of real-time
communication~\cite{Witsenhausen:1979, WalrandVaraiya:1983, Teneketzis:2006,
MT:real-time}. As in real-time communication, the key conceptual
difficulty is that the data available at the transmitter and the receiver is
increasing with time. Thus, the domain of the transmission and the estimation
function increases with time. 

To circumvent this difficulty one needs to identify sufficient statistics
for the data at the transmitter and the data at the receiver. In the
real-time communication literature, dynamic team theory (or decentralized
stochastic control theory) is used to identify such sufficient statistics as
well as to identify a dynamic program to determine the optimal transmission
and estimation strategies. Similar ideas are also used in remote-estimation
literature. In addition, feature (F3) allows one to further simplify the
structure of optimal transmission and estimation strategies. In particular,
when the source is a first-order autoregressive process, majorization theory
is used to show that the optimal transmission strategies is characterized
by a threshold~\cite{LipsaMartins:2011, NayyarBasarTeneketzisVeeravalli:2013,
MH2012, Shietal_arxiv, JC_AM_IFAC16}. In particular, it is optimal to
transmit when the instantaneous distortion due to not transmitting is greater
than a threshold. The optimal thresholds can be computed either using dynamic
programming~\cite{LipsaMartins:2011, NayyarBasarTeneketzisVeeravalli:2013} or
using renewal relationships~\cite{JC_AM_TAC15, JC_AM_IFAC16}.

All of the existing literature on remote-estimation considers either channels
with no packet drops or channels with i.i.d.\ packet drops. In this paper, we
consider packet drop channels with Markovian memory. We identify sufficient
statistics at the transmitter and the receiver. When the source is a
first-order autoregressive process, we show that threshold-based strategies
are optimal but the threshold depends on the previous state of the channel.

\subsection{The communication system}

\subsubsection{Source model} The source is a first-order time-homogeneous Markov
process $\{X_t\}_{t \ge 0}$, $X_t \in \ALPHABET X$. For ease of exposition, in
the first part of the paper we assume that $\ALPHABET X$ is a finite set. We
will later argue that a similar argument works when $\ALPHABET X$ is a general
measurable space. The transition probability matrix of the source is denoted
by $P$, i.e., for any $x, y \in \ALPHABET X$, 
\[
  P_{xy} \DEFINED \PR(X_{t+1} = y \mid X_t = x).
\]

\subsubsection{Channel model} The channel is a Gilbert-Elliott
channel~\cite{gilbert1960,elliott1963}. The channel state $\{S_t\}_{t \ge 0}$
is a binary-valued first-order time-homogeneous Markov process. We use the
convention that $S_t = 0$ denotes that the channel is in the \textsc{off}
state and $S_t = 1$ denotes that the channel is in the \textsc{on} state. The
transition probability matrix of the channel state is denoted by $Q$, i.e.,
for $r, s \in \{0,1\}$, 
\[
  Q_{rs} \DEFINED \PR(S_{t+1} = s | S_t = r).
\]

The input alphabet $\bar {\ALPHABET X}$ of the channel is $\ALPHABET X \cup
\{\BLANK\}$, where $\BLANK$ denotes the event that there is no transmission.
The channel output alphabet $\ALPHABET Y$ is $\ALPHABET X \cup \{\BLANK_0,
\BLANK_1\}$, where the symbols $\BLANK_0$ and $\BLANK_1$ are explained below.
At time $t$, the channel input is denoted by $\bar X_t$ and the channel output is
denoted by $Y_t$. 

The channel is a channel with state. In particular, for any realization
$(\bar x_{0:T}, s_{0:T}, y_{0:T})$ of $(\bar X_{0:T}, S_{0:T}, Y_{0:T})$, we
have that 
\begin{multline}
  \label{eq:channel}
  \PR(Y_t = y_t \mid \bar X_{0:t} = \bar x_{0:t}, S_{0:t} = s_{0:t}) 
  \\ =
  \PR(Y_t = y_t \mid \bar X_t = \bar x_t, S_t = s_t)
\end{multline}
and
\begin{multline}
  \label{eq:state}
  \PR(S_t = s_t \mid \bar X_{0:t} = \bar x_{0:t}, S_{0:t-1} = s_{0:t-1}) 
  \\ =
  \PR(S_t = s_t \mid S_{t-1} = s_{t-1}) = Q_{s_{t-1} s_t}
\end{multline}

Note that the channel output $Y_t$ is a deterministic function of the input
$\bar X_t$ and the state $S_t$. In particular, for any $\bar x \in \bar
{\ALPHABET X}$ and $s \in \{0,1\}$, the channel output $y$ is given as
follows:
\[
  y = \begin{cases}
    \bar x, & \text{if $\bar x \in \ALPHABET X$ and $s = 1$} \\
    \BLANK_1, & \text{if $\bar x= \BLANK$ and $s = 1$} \\
    \BLANK_0, & \text{if $s = 0$}
  \end{cases}
\]
This means that if there is a transmission (i.e., $\bar x \in \ALPHABET X$)
and the channel is on (i.e., $s = 1$), then the receiver observes $\bar x$.
However, if there is no transmission (i.e., $\bar x = \BLANK$) and the
channel is on (i.e., $s=1$), then the receiver observes~$\BLANK_1$, if the
channel is off, then the receiver observes~$\BLANK_0$. 

\subsubsection{The transmitter}

There is no need for channel coding in a remote-estimation setup. Instead,
the role of the transmitter is to determine which source realizations need to
be transmitted. Let $U_t \in \{0,1\}$ denote the transmitter's decision. We
use the convention that $U_t = 0$ denotes that there is no transmission
(i.e., $\bar X_t = \BLANK$) and $U_1 = 1$ denotes that there is transmission
(i.e., $\bar X_t = X_t$). 

Transmission is costly. Each time the transmitter transmits (i.e., $U_t =
1$), it incurs a cost of~$\lambda$.

\subsubsection{The receiver}

At time~$t$, the receiver generates an estimate $\hat X_t \in \ALPHABET X$ of
$X_t$. The quality of the estimate is determined by a distortion function $d
\colon \ALPHABET X  \times \ALPHABET X \to \reals_{\ge 0}$. 

\subsection{Information structure and problem formulation}

It is assumed that the receiver observes the channel state causally. Thus, the
information available at the receiver\footnote{We use superscript~$1$ to
  denote variables at the transmitter and superscript~$2$ to denote
variables at the receiver.} is
\[
  I^2_t = \{S_{0:t}, Y_{0:t} \}.
\]
The estimate $\hat X_t$ is chosen according to
\begin{equation} \label{eq:rx-1}
  \hat X_t = g_t(I^2_t) = g_t(S_{0:t}, Y_{0:t}),
\end{equation}
where $g_t$ is called the \emph{estimation rule} at time~$t$. The collection
$\VEC g \DEFINED (g_1, \dots, g_T)$ for all time is called the
\emph{estimation strategy}.

It is assumed that there is one-step delayed feedback from the receiver to
the transmitter.\footnote{Note that feedback requires two bits: the channel
state $S_t$ is binary and the channel output $Y_t$ can be communicated by
indicating whether $Y_t \in \ALPHABET X$ or not (i.e., transmitting an
\textsc{ack} or a \textsc{nack}).} Thus, the information available at the
transmitter is
\[
  I^1_t = \{X_{0:t}, U_{0:t-1}, S_{0:t-1}, Y_{0:t-1} \}.
\]
The transmission decision $U_t$ is chosen according to
\begin{equation}\label{eq:tx-1}
  U_t = f_t(I^1_t) = f_t(X_{0:t}, U_{0:t-1}, S_{0:t-1}, Y_{0:t-1}),
\end{equation}
where $f_t$ is called the \emph{transmission rule} at time~$t$. The collection
$\VEC f \DEFINED (f_1, \dots, f_T)$ for all time is called the
\emph{transmission strategy}.

The collection $(\VEC f, \VEC g)$ is called a \emph{communication strategy}.
The performance of any communication strategy $(\VEC f, \VEC g)$ is given by
\begin{equation} \label{eq:cost}
  J(\VEC f, \VEC g) = 
  \EXP\bigg[ \sum_{t=0}^T \lambda U_t + d(X_t, \hat X_t) \bigg]
\end{equation}
where the expectation is taken with respect to the joint measure on all
system variables induced by the choice of $(\VEC f, \VEC g)$. 

We are interested in the following optimization problem.
\begin{problem} \label{prob:finite}
  In the model described above,  identify a communication strategy $(\VEC
  f^*, \VEC g^*)$ that minimizes the cost $J(\VEC f, \VEC g)$ defined
  in~\eqref{eq:cost}.
\end{problem}

\section{Main results}\label{sec:main-results}

\subsection{Structure of optimal communication strategies}

Two-types of structural results are established in the real-time
communication literature: (i)~establishing that part of the data at
the transmitter is irrelevant and can be dropped without any loss of
optimality; (ii)~establishing that the common information between the
transmitter and the receiver can be ``compressed'' using a belief state. The
first structural results were first established by
Witsenhausen~\cite{Witsenhausen:1979} while the second structural results
were first established by Walrand Varaiya~\cite{WalrandVaraiya:1983}. 

We establish both types of structural results for remote estimation. First,
we show that $(X_{0:t-1}, U_{0:t-1})$ is irrelevant at the transmitter
(Lemma~\ref{lemma:irrelevant_info_Tx}); then, we use the common information
approach of~\cite{NMT:partial-history-sharing} and establish a belief-state
for the common information $(S_{0:t}, Y_{0:t})$ between the transmitter and
the receiver (Theorem~\ref{thm:structure}). 

\begin{lemma}\label{lemma:irrelevant_info_Tx}
  For any estimation strategy of the form~\eqref{eq:rx-1}, there is no loss
  of optimality in restricting attention to transmission strategies of the
  form
  \begin{equation} \label{eq:tx-2}
    U_t = f_t(X_t, S_{0:t-1}, Y_{0:t-1}).
  \end{equation}
\end{lemma}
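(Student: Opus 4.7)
The plan is to fix an arbitrary estimation strategy $\VEC g$ of the form~\eqref{eq:rx-1}, so that the transmitter is the only remaining decision maker, and then recognize the resulting single-agent problem as a classical Markov decision process (MDP). Once this reduction is made, the claim follows from the standard sufficient-statistic theorem for MDPs: it suffices to exhibit a controlled Markov state that is a function of $I^1_t$ and that involves no variables beyond those appearing on the right side of~\eqref{eq:tx-2}.

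The candidate statistic is $Z_t \DEFINED (X_t, S_{0:t-1}, Y_{0:t-1})$. First, I would verify that $\{(Z_t, U_t)\}$ is a controlled Markov chain, namely that the conditional distribution of $Z_{t+1}$ given $(I^1_t, U_t)$ depends only on $(Z_t, U_t)$. This is checked coordinate by coordinate: $X_{t+1}$ depends on the past only through $X_t$ by the Markov property of the source; $S_t$ depends on the past only through $S_{t-1}$ by~\eqref{eq:state}; and $Y_t$ is a deterministic function of $(\bar X_t, S_t)$, where $\bar X_t$ is in turn a deterministic function of $(X_t, U_t)$. Second, because $\hat X_t = g_t(S_{0:t}, Y_{0:t})$ and the joint law of $(S_t, Y_t)$ given $(I^1_t, U_t)$ depends only on $(Z_t, U_t)$, the conditional expected per-step cost $\EXP[\lambda U_t + d(X_t, \hat X_t) \mid I^1_t, U_t]$ is a well-defined function of $(Z_t, U_t)$ alone.

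With these two ingredients in hand, a routine backward induction on $t = T, T-1, \dots, 0$ yields a dynamic program whose value function depends only on $Z_t$ and whose arg-min is a transmission rule of exactly the form~\eqref{eq:tx-2}. The main obstacle lies in carrying out the first step carefully: since the initial strategy $\VEC f$ is of the general form~\eqref{eq:tx-1}, the past decisions $U_{0:t-1}$ may be arbitrary functions of $X_{0:t-1}$, so one has to argue that conditioning on this richer history $I^1_t$ does not alter the conditional law of $(X_{t+1}, S_t, Y_t)$ beyond what $(Z_t, U_t)$ already captures. This in turn reduces to the primitive independence structure of the source and the channel encoded in~\eqref{eq:channel} and~\eqref{eq:state}, together with the deterministic nature of the channel output map.
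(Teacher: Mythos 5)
Your proposal is correct and follows essentially the same approach as the paper: fix the estimation strategy, identify $(X_t, S_{0:t-1}, Y_{0:t-1})$ as a controlled Markov state, verify both the controlled-Markov transition property and the sufficiency of this statistic for the expected per-step cost, and invoke standard MDP theory. The paper carries out the same two verification steps via equations~\eqref{eq:Tr-Markov} and~\eqref{eq:Tr-Markov-3}, relying on exactly the source Markov property, the channel state equation~\eqref{eq:state}, and the deterministic output map, as you indicate.
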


The proof idea is similar to~\cite{Teneketzis:2006}. We show that $\{X_t,
S_{0:t-1}, Y_{0:t-1}\}_{t \ge 0}$ is a controlled Markov process controlled
by $\{U_t\}_{t \ge 0}$. 
\begin{nodetail} 
  See~\cite{preprint} for proof. 
\end{nodetail}
\begin{detail}
  See Section~\ref{sec:structure} for proof.
\end{detail}

Now, following~\cite{NMT:partial-history-sharing}, for any transmission
strategy $\VEC f$ of the form~\eqref{eq:tx-2} and any realization $(s_{0:T},
y_{0:T})$ of $(S_{0:T}, Y_{0:T})$, define $\varphi_t \colon \ALPHABET X \to
\{0, 1\}$ as
\[
  \varphi_t(x) = f_t(x, s_{0:t-1}, y_{0:t-1}), 
  \quad \forall x \in \ALPHABET X.
\]
Furthermore, define conditional probability measures~$\pi^1_t$ and $\pi^2_t$
on $\ALPHABET X$ as follows: for any $x \in \ALPHABET X$,
\begin{align*}
  \pi^1_t(x) &\DEFINED \PR^{\VEC f}(X_t = x \mid 
    S_{0:t-1} = s_{0:t-1}, Y_{0:t-1} = y_{0:t-1} ), 
  \\
  \pi^2_t(x) &\DEFINED \PR^{\VEC f}(X_t = x \mid 
  S_{0:t} = s_{0:t}, Y_{0:t} = y_{0:t} ).
  \\
\end{align*}
We call $\pi^1_t$ the \emph{pre-transmission belief} and $\pi^2$ the
\emph{post-transmission belief}. 
Note that when $(S_{0:T}, Y_{0:T})$ are random variables, then $\pi^1_t$ and
$\pi^2_t$ are also random variables which we denote by $\Pi^1_t$ and $\Pi^2_t$.

For the ease of notation, for any $\varphi \colon \ALPHABET X \to \{0,1\}$
and $i \in \{0, 1\}$, define the following:
\begin{itemize}
  \item $B_i(\varphi) = \{ x \in \ALPHABET X : \varphi(x) = i \}$.
  \item For any probability distribution $\pi$ on $\ALPHABET X$ and any
    subset $\ALPHABET A$ of $\ALPHABET X$, $\pi(\ALPHABET A)$ denotes
    $\sum_{x \in \ALPHABET A} \pi(x)$. 
  \item For any probability distribution $\pi$ on $\ALPHABET X$, $\xi =
    \pi|_{\varphi}$ means that $\xi(x) = \IND_{\{ \varphi(x) = 0
    \}} \pi(x)/\pi(B_0(\varphi))$.
\end{itemize}

\begin{lemma}\label{lemma:F1-F2}
  Given any transmission strategy $\VEC f$ of the
  form~\eqref{eq:tx-2}:
  \begin{enumerate}
    \item there exists a function~$F^1$ such that
      \begin{equation}
        \pi^1_{t+1} = F^1(\pi^2_t) = \pi^2_t P.
        \label{eq:pi-update-2} 
      \end{equation}
    \item there exists a function $F^2$ such that
      \begin{equation}
        \pi^2_t = F^2(\pi^1_t, \varphi_{t}, y_t).
        \label{eq:pi-update-1} 
      \end{equation}
      In particular,
      \begin{equation}
        \pi^2_t = \begin{cases}
          \delta_{y_t} & \mbox{if $y_t \in \ALPHABET X$} 
          \\
          \pi^1_t|_{\varphi_t}
          , & \mbox{if $y_t = \BLANK_1$} \\
          \pi^1_t , & \mbox{if $y_t = \BLANK_0$}.
        \end{cases}
        \label{eq:F2}
      \end{equation}
  \end{enumerate}
\end{lemma}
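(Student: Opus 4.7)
The plan is to prove the two parts via direct conditional-probability calculations, leveraging the Markov structure of the source together with the channel equations~\eqref{eq:channel}--\eqref{eq:state}. Throughout I fix a realization $(s_{0:t-1}, y_{0:t-1})$ so that $\varphi_t$ is a fixed $\{0,1\}$-valued map on $\ALPHABET X$, and compute probabilities under $\PR^{\VEC f}$.

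For part~(1), I would start from $\pi^1_{t+1}(y) = \PR^{\VEC f}(X_{t+1} = y \mid S_{0:t} = s_{0:t}, Y_{0:t} = y_{0:t})$, condition on $X_t$, and use that $\{X_t\}$ evolves as an autonomous Markov chain with kernel $P$ whose dynamics do not depend on the channel state, on $\bar X_t$, or on the transmitter's decisions. Hence $X_{t+1}$ is conditionally independent of $(S_{0:t}, Y_{0:t})$ given $X_t$, which yields
$$\pi^1_{t+1}(y) = \sum_{x \in \ALPHABET X} P_{xy}\,\pi^2_t(x) = (\pi^2_t P)(y),$$
and this is~\eqref{eq:pi-update-2}.

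For part~(2), I would decompose the update $\pi^1_t \mapsto \pi^2_t$ into two successive conditionings: first on $S_t = s_t$, then on $Y_t = y_t$. From~\eqref{eq:state}, $\PR(S_t = s_t \mid X_t = x, S_{0:t-1}, Y_{0:t-1}) = Q_{s_{t-1} s_t}$ is constant in $x$, so Bayes' rule shows that appending $S_t$ to the conditioning leaves the belief equal to $\pi^1_t$. A second Bayes step using the deterministic channel output rule then handles each case: if $y_t \in \ALPHABET X$, the likelihood $\IND_{\{x = y_t\}}$ collapses the belief to $\delta_{y_t}$; if $y_t = \BLANK_1$, the likelihood $\IND_{\{\varphi_t(x) = 0\}}$ restricts $\pi^1_t$ to $B_0(\varphi_t)$ and renormalizes by $\pi^1_t(B_0(\varphi_t))$, giving $\pi^1_t|_{\varphi_t}$; if $y_t = \BLANK_0$, the likelihood equals $1$ for every $x$ and leaves the belief unchanged.

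The main subtlety will be the $y_t = \BLANK_1$ case. There one must use the identity $U_t = \varphi_t(X_t)$ (from Lemma~\ref{lemma:irrelevant_info_Tx}) to translate the receiver's observation of $\BLANK_1$ into the event $\{\varphi_t(X_t) = 0\}$, and verify that the conditional probability $\PR(Y_t = \BLANK_1 \mid X_t = x, S_t = 1, S_{0:t-1}, Y_{0:t-1})$ reduces to $\IND_{\{\varphi_t(x) = 0\}}$ before Bayes can be applied. Once this reduction is in place, all three formulas in~\eqref{eq:F2} follow by routine manipulations.
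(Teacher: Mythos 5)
Your proof is correct and follows the same underlying route as the paper's: direct conditional-probability computations using the source Markov property for part~(1) and the channel equations~\eqref{eq:channel}--\eqref{eq:state} for part~(2). The only organizational difference is that you split the part~(2) update into two Bayes steps (first append $S_t$, showing it leaves the belief unchanged because $Q_{s_{t-1}s_t}$ is constant in $x$; then append $Y_t$), whereas the paper conditions on the pair $(S_t, Y_t)$ in one joint Bayes step and lets the $Q_{s_{t-1}s_t}$ factor cancel in numerator and denominator. Your two-stage version makes slightly more explicit why the channel-state feedback itself carries no information about $X_t$, but the content is identical, and your case analysis for $y_t \in \{\BLANK_0, \BLANK_1, X_t\}$ matches the claimed formula~\eqref{eq:F2}.
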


Note that in~\eqref{eq:pi-update-2}, we are treating $\pi^2_t$ as a
row-vector and in~\eqref{eq:F2}, $\delta_{y_t}$ denotes a Dirac measure
centered at $y_t$. The update equations~\eqref{eq:pi-update-2} and
\eqref{eq:pi-update-1} are standard non-linear filtering equations.
\begin{nodetail} 
  See~\cite{preprint} for proof. 
\end{nodetail}
\begin{detail}
  See Section~\ref{sec:structure} for proof.
\end{detail}

\begin{theorem} \label{thm:structure}
  In Problem~\ref{prob:finite}, we have that:
  \begin{enumerate} 
    \item \emph{Structure of optimal strategies:}
      There is no loss of optimality in restricting attention to optimal
      transmission and estimation strategies of the form:
      \begin{align}
        U_t &= f^*_t(X_t, S_{t-1}, \Pi^1_t), 
        \label{eq:tx-*}\\
        \hat X_t &= g^*_t(\Pi^2_t).
        \label{eq:rx-*}
      \end{align}
    \item \emph{Dynamic program:} Let $\Delta(\mathcal X)$ denote the space of probability distributions on $\mathcal X$. Define value functions $V^1_t \colon
      \{0,1\} \times \Delta(\ALPHABET X) \to \reals$ and $V^2_t \colon \{0,
      1\} \times \Delta(\ALPHABET X) \to \reals$ as follows. 
      \begin{gather}
        V^1_{T+1}(s, \pi^1) = 0, \\
        \intertext{and for $t \in \{T, \dots, 0\}$}
        \begin{lgathered}[\hsize]
          V^1_t(s, \pi^1) = \min_{\varphi \colon \ALPHABET X \to \{0, 1\}} 
          \Big\{ \lambda \pi^1(B_1(\varphi))
            \\
            \quad {} + 
            W^0_t(\pi^1, \varphi) \pi^1(B_0(\varphi))
            + \smashoperator{\sum_{x \in B_1(\varphi)} }
            W^1_t(\pi^1, \varphi,x) \pi^1(x)
          \Big\}
        \end{lgathered}
        \label{eq:dp-1}
        \\
        V^2_t(s, \pi^2) = \min_{\hat x \in \ALPHABET X}
        \sum_{x \in \ALPHABET X} d(x, \hat x) \pi^2(x)
        + V^1_{t+1}(s, \pi^2 P),
        \label{eq:dp-2}
      \end{gather}
     where, 
    \begin{align*}
      W^0_t(\pi^1, \varphi) &= 
      Q_{s0} V^2_t(0, \pi^1)
      +
      Q_{s1} V^2_t(1, \pi^1|_\varphi ),
      \\
      W^1_t(\pi^1, \varphi, x) &= 
      Q_{s0} V^2_t(0, \pi^1)
      +
      Q_{s1} V^2_t(1, \delta_x).
    \end{align*}

    Let $\Psi_t(s, \pi^1)$ denote the arg min of the right hand side
    of~\eqref{eq:dp-1}. Then, the optimal transmission strategy of the
    form~\eqref{eq:tx-*} is given by
    \[
      f^*_t(\cdot, s, \pi^1) = \Psi_t(s, \pi^1).
    \]
    Furthermore, the optimal estimation strategy of the form~\eqref{eq:rx-*}
    is given by
    \begin{equation} \label{eq:rx-form}
      g^*_t(\pi^2) = \arg \min_{\hat x \in \ALPHABET X} \sum_{x \in \ALPHABET X}
      d(x,\hat x) \pi^2(x).
    \end{equation}
  \end{enumerate}
\end{theorem}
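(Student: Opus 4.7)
My plan is to invoke the common-information approach of Nayyar et al.~\cite{NMT:partial-history-sharing}. Building on Lemma~\ref{lemma:irrelevant_info_Tx}, the relevant data at time~$t$ is $(X_t, S_{0:t-1}, Y_{0:t-1})$ at the transmitter and $(S_{0:t}, Y_{0:t})$ at the receiver. The common information is $c_t = (S_{0:t-1}, Y_{0:t-1})$, the transmitter's private information is $X_t$, and the receiver's additional data $(S_t, Y_t)$ enters the common information at the next step via the one-step feedback. This fits the partial-history-sharing framework, so the problem can be reformulated as a centralized problem in which a fictitious coordinator observes $c_t$, selects a prescription $\varphi_t \colon \ALPHABET X \to \{0,1\}$ that the transmitter applies as $U_t = \varphi_t(X_t)$, and later, after $(S_t, Y_t)$ are appended to the common information, selects the estimate $\hat X_t$.

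Next I argue that $(S_{t-1}, \Pi^1_t)$ is an information state for the coordinator at the pre-transmission stage and $(S_t, \Pi^2_t)$ at the post-transmission stage. Three facts are checked: (i)~the expected transmission cost equals $\lambda \pi^1(B_1(\varphi))$ and the distortion cost is computed from $\Pi^2_t$, which by Lemma~\ref{lemma:F1-F2}(ii) is a function of $(\Pi^1_t, \varphi, Y_t)$; (ii)~by~\eqref{eq:state}, the law of $S_t$ given common information depends only on $S_{t-1}$ through $Q$, and conditional on $(\Pi^1_t, \varphi, S_t)$ the output $Y_t$ has a distribution determined by the channel law because $\bar X_t$ is a function of $(\varphi, X_t)$ with $X_t \mid c_t \sim \Pi^1_t$; (iii)~by Lemma~\ref{lemma:F1-F2}(i), $\Pi^1_{t+1} = \Pi^2_t P$, so the post-to-pre-transmission update requires only $\Pi^2_t$. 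The key subtlety is that $S_{t-1}$ must be retained alongside $\Pi^1_t$: it does not affect the belief about $X_t$, but it governs the transition to $S_t$.

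Given these information states, standard POMDP dynamic programming yields the recursions~\eqref{eq:dp-1}--\eqref{eq:dp-2}. Unpacking~\eqref{eq:dp-1} is a case analysis conditional on $(S_{t-1} = s, \Pi^1_t = \pi^1, \varphi_t = \varphi)$: if $X_t \in B_0(\varphi)$ (probability $\pi^1(B_0(\varphi))$) then $\bar X_t = \BLANK$, and by~\eqref{eq:F2} the belief $\Pi^2_t$ equals $\pi^1$ when $S_t = 0$ and $\pi^1|_\varphi$ when $S_t = 1$, which weighted by $Q_{s0}$ and $Q_{s1}$ gives $W^0_t$; if $X_t = x \in B_1(\varphi)$ (probability $\pi^1(x)$) then $\Pi^2_t$ equals $\pi^1$ when $S_t=0$ and $\delta_x$ when $S_t=1$, giving $W^1_t$. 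Equation~\eqref{eq:dp-2} is the additive decomposition of the post-transmission stage: the instantaneous distortion depends only on $\Pi^2_t$, and the continuation value is $V^1_{t+1}(s, \pi^2 P)$.

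The structural forms~\eqref{eq:tx-*}--\eqref{eq:rx-*} then follow immediately. Letting $\Psi_t(s, \pi^1)$ denote an arg-min prescription in~\eqref{eq:dp-1}, one has $U_t = \Psi_t(S_{t-1}, \Pi^1_t)(X_t)$, which is of the form $f^*_t(X_t, S_{t-1}, \Pi^1_t)$; the pointwise minimizer in~\eqref{eq:dp-2} gives~\eqref{eq:rx-form}. The main obstacle is the careful verification of the information-state property for the coordinator's POMDP, in particular (a)~showing that $(S_{t-1}, \Pi^1_t)$ is both necessary and sufficient—$S_{t-1}$ cannot be absorbed into $\Pi^1_t$ because it controls the channel transition, and $\Pi^1_t$ cannot be replaced by $(S_{0:t-1}, Y_{0:t-1})$ without enlarging the state unnecessarily—and (b)~expressing the joint law of $(S_t, \Pi^2_t)$ given $(S_{t-1}, \Pi^1_t, \varphi)$ in the closed form used to derive $W^0_t$ and $W^1_t$.
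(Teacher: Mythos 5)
Your proposal matches the paper's argument: both reduce to the partial-history-sharing framework of Nayyar et al., take $(S_{t-1},\Pi^1_t)$ and $(S_t,\Pi^2_t)$ as the coordinator's information states (equivalent to the joint beliefs $\Theta^1_t,\Theta^2_t$ in the paper), derive the dynamic program from the coordinator's POMDP, and then observe that the minimizer in~\eqref{eq:dp-2} is independent of $s$ so that $S_t$ may be dropped from the estimator. The only stylistic difference is that you verify the information-state property and the case split behind $W^0_t,W^1_t$ by hand, whereas the paper cites the general theorems of~\cite{NMT:partial-history-sharing} directly.
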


The proof idea is as follows. Once we restrict attention to transmission
strategies of the form~\eqref{eq:tx-2}, the information structure is partial
history sharing~\cite{NMT:partial-history-sharing}. Thus, one can use the
common information approach of~\cite{NMT:partial-history-sharing} and obtain
the structure of optimal strategies. 
\begin{nodetail}
  See~\cite{preprint} for proof. 
\end{nodetail}
\begin{detail}
  See Section~\ref{sec:structure} for proof.
\end{detail}

\begin{remark}
  The first term in~\eqref{eq:dp-1} is the expected communication cost, the
  second term is the expected cost-to-go when the transmitter does not
  transmit, and the third term is the expected cost-to-go when the transmitter
  transmits. The first term in~\eqref{eq:dp-2} is the expected distortion and
  the second term is the expected cost-to-go.
\end{remark}

\begin{remark}
  Although the above model and result are stated for sources with finite
  alphabets, they extend naturally to general state spaces (including
  Euclidean spaces) under standard technical assumptions.
  See~\cite{Yuksel:2013} for details.
\end{remark}

\subsection{Optimality of threshold-based strategies for autoregressive source}

In this section, we consider a first-order autoregressive source $\{X_t\}_{t
\ge 0}$, $X_t \in \reals$, where the initial state $X_0 = 0$ and for $t \ge
0$, we have that
\begin{equation}\label{eq:AR1}
  X_{t+1} = a X_t + W_t,
\end{equation}
where $a \in \reals$ and $W_t \in \reals$ is distributed according to a
symmetric and unimodal distribution with probability density function~$\mu$.
Furthermore, the per-step distortion is given by $d(X_t - \hat X_t)$, where
$d(\cdot)$ is a even function that is increasing on $\reals_{\ge 0}$. The
rest of the model is the same as before. 

For the above model, we can further simplify the result of
Theorem~\ref{thm:structure}. See Section~\ref{sec:proof_AR-Theorem} for the proof.
\begin{theorem}\label{thm:threshold}
  For a first-order autoregressive source with symmetric and unimodal disturbance, 
  \begin{enumerate}
    \item \emph{Structure of optimal estimation strategy:} The optimal
      estimation strategy is given as follows: $\hat X_0 = 0$, and for $t \ge
      0$,
      \begin{equation}
        \hat X_{t} = \begin{cases}
          a \hat X_{t-1}, & \mbox{if $Y_t \in \ALPHABET \{\BLANK_0, \BLANK_1\}$} \\
          Y_t, & \mbox{if $Y_t \in \reals$}
        \end{cases}
        \label{eq:rx-a}
      \end{equation}

    \item \emph{Structure of optimal transmission strategy:} There exist
      threshold functions $k_t \colon \{0, 1\} \to \reals_{\ge 0}$ such that
      the following transmission strategy is optimal:
      \begin{equation}
        f_t(X_t, S_{t-1}, \Pi^1_t) = \begin{cases}
          1, & \mbox{if $|X_t - a \hat X_{t-1}| \ge k_t(S_{t-1})$} \\
          0, & \mbox{otherwise}.
        \end{cases}
        \label{eq:tx-a}
      \end{equation}
  \end{enumerate}
\end{theorem}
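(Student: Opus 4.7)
The plan is to reduce the dynamic program of Theorem~\ref{thm:structure} to one on a scalar \emph{error} state and then deploy majorization arguments in the spirit of~\cite{LipsaMartins:2011,NayyarBasarTeneketzisVeeravalli:2013,MH2012} to obtain the threshold structure. First I would define the error $E_t \DEFINED X_t - a\hat X_{t-1}$ (with $E_0 = X_0$). If one restricts to estimators of the form~\eqref{eq:rx-a}, then $E_t$ has autonomous dynamics: $E_{t+1} = aE_t + W_t$ when $Y_t \in \{\BLANK_0, \BLANK_1\}$ and $E_{t+1} = W_t$ when $Y_t \in \reals$. Moreover, the pre-transmission belief $\Pi^1_t$ is just the distribution of $E_t$ translated by $a\hat X_{t-1}$. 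Since $d$ is even, the per-step distortion $d(X_t - \hat X_t)$ is invariant under this translation, so Theorem~\ref{thm:structure}'s dynamic program reduces to one on the law of $E_t$ alone, parametrized by $s = S_{t-1}$.

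To justify both~\eqref{eq:rx-a} and this reduction, I would run a backward induction on the ansatz that (i)~$\Pi^1_t$ is symmetric and unimodal about $a\hat X_{t-1}$ and (ii)~the optimal $\varphi_t$ is symmetric about the same center. Because $d$ is even and non-decreasing on $\reals_{\ge 0}$, the minimizer in~\eqref{eq:rx-form} is the center of symmetry of $\Pi^2_t$, yielding $\hat X_t = Y_t$ when $Y_t \in \reals$ and $\hat X_t = a\hat X_{t-1}$ otherwise. Preservation of the ansatz under~\eqref{eq:F2} is immediate in the $\BLANK_0$ and Dirac cases; in the $\BLANK_1$ case, $\Pi^1_t|_{\varphi_t}$ is the truncation of a symmetric-unimodal density to a symmetric set, hence symmetric-unimodal about the same center. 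The prediction step~\eqref{eq:pi-update-2} then convolves with the symmetric-unimodal density $\mu$ scaled by $a$, so $\Pi^1_{t+1}$ is symmetric-unimodal about $a\hat X_t$, closing the induction.

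The main step --- and the one I expect to be hardest --- is proving that the optimal $\varphi_t$ has the threshold form~\eqref{eq:tx-a}. Written as a functional of the error law $\tilde\pi$, I would show by backward induction that each $V^i_t(s,\cdot)$ is (a)~invariant under the reflection $e \mapsto -e$ and (b)~Schur-concave with respect to the symmetric-decreasing rearrangement order on symmetric-unimodal densities. Part~(a) follows from the symmetry of $\mu$ and $d$; part~(b) requires checking that each operation appearing in~\eqref{eq:dp-1}--\eqref{eq:dp-2} --- convolution with the scaled $\mu$, symmetric truncation, collapse to a Dirac, convex combination with weights $Q_{s,\cdot}$, and pointwise minimization --- preserves Schur-concavity on this class. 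Combined with the monotonicity of $d$, this forces the minimizer $\varphi^*_t$ at error $e$ to satisfy $\varphi^*_t(e) = 1$ exactly when $|e| \ge k_t(s)$, giving~\eqref{eq:tx-a}; the dependence of $k_t$ on $s = S_{t-1}$ enters only through the weights $Q_{s0}, Q_{s1}$ in $W^0_t$ and $W^1_t$. The obstacle is maintaining symmetric-unimodality and Schur-concavity simultaneously through the truncation update --- this is precisely where the threshold ansatz for $\varphi_t$ and the symmetric-unimodal ansatz for $\Pi_t$ close into each other, extending the i.i.d.\ argument of~\cite{JC_AM_IFAC16} to the two-state Markov channel setting.
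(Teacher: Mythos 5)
Your overall plan---change variables to the error process, verify that symmetric unimodality propagates under the filter updates, and deploy SU-majorization monotonicity of the value function to obtain threshold prescriptions and the Kalman-like estimator---tracks the paper's proof closely through its first stage: it corresponds to the paper's change-of-variable construction of $\{E_t\}$, Properties~\ref{prop:1}--\ref{prop:6}, and Lemma~\ref{lemma:property}. That part is fine. But it stops short of what Theorem~\ref{thm:threshold} actually claims, and the omitted step is the heart of the result, not a formality.

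The Schur-concavity (SU-majorization) argument runs on the belief-state dynamic program of Theorem~\ref{thm:structure}, in which the minimizing prescription $\Psi_t(s,\pi^1)$ is a function of the belief $\pi^1$ as well as of $(t,s)$. What that induction delivers is exactly (P2) of the paper's Lemma~\ref{lemma:property}: the optimal $\varphi_t$ is threshold-based around the center, with a threshold $\tilde k_t(S_{t-1},\Pi^1_t)$ that in general depends on $\Pi^1_t$---the paper explicitly flags this as the ``weaker version'' and notes in (P2) that $\varphi_t$ depends on $\pi^1$. Your line ``forces $\varphi^*_t(e)=1$ exactly when $|e|\ge k_t(s)$'' silently drops the $\Pi^1_t$ dependence. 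Since $\Pi^1_t$ is itself a random element of $\Delta(\reals)$ that evolves with the dropout history and past prescriptions, removing it is the substance of the theorem, and it does not follow from symmetry or majorization alone. The paper closes this gap with a second, different argument: freeze the estimator to the form~\eqref{eq:rx-a}, observe that the transmitter's best-response problem then becomes a fully observed MDP with scalar state $(E_t, S_{t-1})$ and DP~\eqref{eq:dp-J0}--\eqref{eq:dp-J2}, and invoke the stochastic-dominance machinery of \cite{JC-AM-OR16} (verified via conditions (C1)--(C3) and Lemma~\ref{lemma:M0-inc}) to show $J_t(\cdot,s)$ and $J^0_t(\cdot,s)-J^1_t(\cdot,s)$ are even and increasing on $\reals_{\ge 0}$, so the no-transmit region is an interval $[-k_t(s),k_t(s)]$. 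Your proposal needs this reduction (or an alternative argument that the trajectory of $\Pi^1_t$ is parameterized by something that factors out) before it reaches~\eqref{eq:tx-a} as stated.
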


\begin{remark}
  As long as the receiver can distinguish between the events $\BLANK_0$
  (i.e., $S_t = 0$)  and $\BLANK_1$ (i.e., $U_t = 0$ and $S_t = 1$), the
  structure of the optimal estimator does not depend on the channel state
  information at the receiver. 
\end{remark}

\begin{remark}
  It can be shown that under the optimal strategy, $\Pi^2_t$ is symmetric and
  unimodal around $\hat X_t$ and, therefore, $\Pi^1_t$ is symmetric and
  unimodal around $a \hat X_{t-1}$. Thus,
  the transmission and estimation strategies in Theorem~\ref{thm:threshold}
  depend on the pre- and post-transmission beliefs only through their means.
\end{remark}

\begin{remark}
  Recall that the distortion function is even and increasing. Therefore, the
  condition $|X_t - a\hat X_{t-1}| \ge k_t(S_{t-1})$ can be written as
  $d(X_t - a\hat X_{t-1}) \ge \tilde k_t(S_{t-1}) \DEFINED d(k_t(S_{t-1}))$.
  Thus, the optimal strategy is to transmit if the per-step distortion due to
  not transmitting is greater than a threshold. 
\end{remark}

\begin{detail}
  \section{Proof of the structural results} \label{sec:structure}

  \subsection{Proof of Lemma~\ref{lemma:irrelevant_info_Tx}}

  Arbitrarily fix the estimation strategy $\VEC g$ and consider the \emph{best
  response} strategy at the transmitter. We will show that $\tilde I^1_t
  \DEFINED (X_t, S_{0:t-1}, Y_{0:t-1})$ is an information state at the
  transmitter. 

  Given any realization $(x_{0:T}, s_{0:T}, y_{0:T}, u_{0:T})$ of the system
  variables $(X_{0:T}, S_{0:T}, Y_{0:T}, U_{0:T})$, define $i^1_t = (x_{0:t},
    s_{0:t-1},
  y_{0:t-1}, \allowbreak u_{0:t-1})$ and $\tilde \imath^1_t = (x_t, s_{0:t-1},
  y_{0:t-1})$. Now, for any $\breve \imath^1_{t+1} = (\breve x_{t+1},
  \breve s_{0:t}, \breve y_{0:t}) = (\breve x_{t+1}, \breve s_t, \breve y_t,
  \breve \imath^1_t)$, we use the shorthand $\PR(\tilde
  \imath^1_{t+1} | \tilde \imath^1_{0:t}, u_{0:t})$ to denote $\PR(\tilde
    I^1_{t+1} = \breve \imath^1_{t+1} | \tilde I^1_{0:t} = \tilde
  i^1_{0:t}, U_{0:t} = u_{0:t})$. Then,
  \begin{align}
    \hskip 2em & \hskip -2em
    \PR(\breve \imath^1_{t+1} | i^1_{t}, u_{t}) =
    \PR(\breve x_{t+1}, \breve s_{t}, \breve y_{t}, \breve \imath^1_t | x_{0:t}, s_{0:t-1}, y_{0:t-1}, u_{0:t}) \notag \\
    &\stackrel{(a)}=
    \PR(\breve x_{t+1}, \breve s_{t}, \breve y_{t}, \breve \imath^1_t | x_{0:t}, \bar x_{0:t}, s_{0:t-1}, y_{0:t-1}, u_{0:t}) \notag \\
    &\stackrel{(b)}= 
    \PR(\breve x_{t+1} | x_t) \PR(\breve y_t | \bar x_t, \breve s_{t}) 
    \PR(\breve s_t | s_{t-1}) \IND_{\{ \breve \imath^1_t = \tilde \imath^1_t \}}
    \notag \\
    &=
    \PR(\breve \imath^1_{t+1} | \tilde \imath^1_t, u_t)
    \label{eq:Tr-Markov}
  \end{align}
  where we have added $\bar x_{0:t}$ in the conditioning in $(a)$ because
  $\bar x_{0:t}$ is a deterministic function of $(x_{0:t}, u_{0:t})$ and
  $(b)$ follows from the source and the channel models.
  By marginalizing~\eqref{eq:Tr-Markov}, we get that
  for any $\breve \imath^2_t = (\breve s_t, \breve y_t, \breve \imath^1_t)$,
  we have
  \begin{equation}
    \PR(\breve \imath^2_t | i^1_t, u_t) =
    \PR(\breve \imath^2_t | \tilde \imath^1_t, u_t) 
    \label{eq:Tr-Markov-2}
  \end{equation}
  
  Now, let $c(X_t, U_t, \hat X_t) = \lambda U_t + d(X_t, \hat X_t)$ denote the
  per-step cost. Recall that $\hat X_t = g_t(I^2_t)$. Thus,
  by~\eqref{eq:Tr-Markov-2}, we get that
  \begin{equation}
    \EXP[c(X_t, U_t, \hat X_t) | i^1_t, u_t]
    = 
    \EXP[c(X_t, U_t, \hat X_t) | \tilde \imath^1_t, u_t ].
    \label{eq:Tr-Markov-3}
  \end{equation}

  Eq.~\eqref{eq:Tr-Markov} shows that $\{\tilde I^1_t\}_{t \ge 0}$ is a
  controlled Markov process controlled by $\{U_t\}_{t \ge 0}$.
  Eq.~\eqref{eq:Tr-Markov-3} shows that $\tilde I^1_t$ is sufficient for
  performance evaluation. Hence, by Markov decision
  theory~\cite{KumarVaraiya:1986}, there is no loss of optimality in
  restricting attention to transmission strategies of the
  form~\eqref{eq:tx-2}.

  \subsection{Proof of Lemma~\ref{lemma:F1-F2}}

  Consider
  \begin{align}
    \pi^1_{t+1}(x_{t+1}) &= \PR(x_{t+1}|s_{0:t}, y_{0:t}) \notag \\
    &= \sum_{x_t \in \ALPHABET X} \PR(x_{t+1} | x_t) \PR(x_t | s_{0:t}, y_{0:t})
    \notag \\
    &= \sum_{x_t \in \ALPHABET X} P_{x_t x_{t+1}} \pi^2_t(x_t) = \pi^2_t P
    \label{eq:F1-update}
  \end{align}
  which is the expression for $F^1(\cdot)$.

  For $F^2$, we consider the three cases separately. 
  For $y_t \in \ALPHABET X$, we have
  \begin{equation}
    \pi^2_t(x) = \PR(X_t = x | s_{0:t}, y_{0:t}) = \IND_{\{ x = y_t \}}.
    \label{eq:pi-2-1}
  \end{equation}

  For $y_t \in \{\BLANK_0, \BLANK_1\}$, we have
  \begin{align}
    \pi^2_t(x) &= \PR(X_t = x | s_{0:t}, y_{0:t})  \notag \\
    &= \frac { \PR(X_t = x, y_t, s_t | s_{0:t-1}, y_{0:t-1}) }
             { \PR(y_t, s_t | s_{0:t-1}, y_{0:t-1}) }
    \label{eq:pi-2-2}
  \end{align}

  Now, when $y_t = \BLANK_0$, we have that
  \begin{align}
    \hskip 2em & \hskip -2em
    \PR(x_t, y_t, s_t | s_{0:t-1}, y_{0:t-1}) =
    \PR(y_t | x_t, \varphi_t(x_t), s_t) Q_{s_{t-1}s_t} \pi^1_t(x_t)
    \notag \\
    &\stackrel{(a)}=
    \begin{cases}
      Q_{s_{t-1} 1} \pi^1_t(x_t)
          , & \text{if $\varphi_t(x_t) = 0$ and $s_t = 1$} \\
      0   , & \text{otherwise}
    \end{cases}
    \label{eq:pi-2-1b}
  \end{align}
  where~$(a)$ is obtained from the channel model.
  Substituting~\eqref{eq:pi-2-1b} in~\eqref{eq:pi-2-2} and canceling
  $Q_{s_{t-1} 1} \IND_{\{s_t = 1\}}$ from the numerator and the denominator, we
  get (recall that this is for the case when $y_t = \BLANK_0$),
  \begin{align}
    \pi^2_t(x) 
    % &= 
    % \frac{ Q_{s_{t-1} 1} \IND_{\{\varphi_t(x) = 0, s_t = 1\}} \pi^1_t(x) }
    %      { 
    %        \sum\limits_{\{x' \in \ALPHABET X : \varphi_t(x') = 0\}}  
    %        Q_{s_{t-1} 1} \IND_{\{s_t = 1\}} \pi^1_t(x')
    %      }
    %   \notag \\
    &= \frac { \IND_{\{\varphi_t(x) = 0\}} \pi^1_t(x) }
             { \pi^1_t( B_0(\varphi)) }.
      \label{eq:pi-2-3a}
  \end{align}

  Similarly, when $y_t = \BLANK_1$, we have that
  \begin{align}
    \hskip 2em & \hskip -2em
    \PR(x_t, y_t, s_t | s_{0:t-1}, y_{0:t-1}) =
    \PR(y_t | x_t, \varphi_t(x_t), s_t) Q_{s_{t-1}s_t} \pi^1_t(x_t)
    \notag \\
    &\stackrel{(b)}=
    \begin{cases}
      Q_{s_{t-1} 0} \pi^1_t(x_t)
          , & \text{if $s_t = 0$} \\
      0   , & \text{otherwise}
    \end{cases}
    \label{eq:pi-2-2b}
  \end{align}
  where~$(b)$ is obtained from the channel model.
  Substituting~\eqref{eq:pi-2-2b} in~\eqref{eq:pi-2-2} and canceling
  $Q_{s_{t-1}0} \IND_{\{s_t = 0\}}$ from the numerator and the denominator, we
  get (recall that this is for the case when $y_t = \BLANK_1$),
  \begin{equation}
    \pi^2_t(x) 
    % = 
    % \frac{ Q_{s_{t-1} 0} \IND_{\{ s_t = 0\}} \pi^1_t(x) }
    %      { 
    %        \sum\limits_{x' \in \ALPHABET X}  
    %        Q_{s_{t-1} 0} \IND_{\{s_t = 0\}} \pi^1_t(x)
    %      }
      =
      \pi^1_t(x).
      \label{eq:pi-2-3b}
  \end{equation}
  By combining~\eqref{eq:pi-2-1},
  \eqref{eq:pi-2-3a} and~\eqref{eq:pi-2-3b}, we get~\eqref{eq:F2}.

  \subsection{Proof of Theorem~\ref{thm:structure}}

  Once we restrict attention to transmission strategies of the
  form~\eqref{eq:tx-2}, the information structure is partial history
  sharing~\cite{NMT:partial-history-sharing}. Thus, one can use the common
  information approach of~\cite{NMT:partial-history-sharing} and obtain the
  structure of optimal strategies. 

  Following~\cite{NMT:partial-history-sharing}, we split the information
  available at each agent into a ``common information'' and ``local
  information''. Common information is the information available to all
  decision makers in the future; the remaining data at the decision maker is
  the local information. Thus, at the transmitter, the common information
  is $C^1_t \DEFINED \{S_{0:t-1}, Y_{0:t-1}\}$ and the local information is
  $L^1_t \DEFINED X_t$. Similarly, at the receiver, the common information is
  $C^2_t \DEFINED \{S_{0:t}, Y_{0:t}\}$ and the local information is $L^2_t
  \DEFINED \emptyset$. When the transmitter makes a decision, the state
  (sufficient for input output mapping) of the system is $(X_t, S_{t-1})$; when
  the receiver makes a decision, the state of the system is $(X_t, S_t)$.
  By~\cite[Proposition 1]{NMT:partial-history-sharing}, we get that the
  sufficient statistic $\Theta^1_t$ for the common information at the
  transmitter is 
  \[
    \Theta^1_t(x,s) = \PR(X_t = x, S_{t-1} = s | S_{0:t-1}, Y_{0:t-1}),
  \]
  and the sufficient statistic $\Theta^2_t$ for the common information at the receiver is
  \[
    \Theta^2_t(x,s) = \PR(X_t = x, S_t = s | S_{0:t}, Y_{0:t}).
  \]
  Note that $\Theta^1_t$ is equivalent to $(\Pi^1_t, S_{t-1})$ and $\Theta^2_t$
  is equivalent to $(\Pi^2_t, S_t)$. Therefore, by~\cite[Theorem 2]
  {NMT:partial-history-sharing}, there is no loss of optimality in
  restricting attention to transmission strategies of the form~\eqref{eq:tx-*}
  and estimation strategies of the form
  \begin{equation}
    \hat X_t = g_t(S_t, \Pi^2_t).
    \label{eq:rx-alt}
  \end{equation}
  Furthermore, the dynamic program of~\ref{thm:structure} follows
  from~\cite[Theorem 3]{NMT:partial-history-sharing}.

  Note that the right hand side of~\eqref{eq:dp-2} implies that $\hat X_t$
  does not depend on $S_t$. Thus, instead of~\eqref{eq:rx-alt}, we can
  restrict attention to estimation strategy of the form~\eqref{eq:rx-*}.
  Furthermore, the optimal estimation strategy is given by~\eqref{eq:rx-form}.

\end{detail}

\section{Proof of optimality of threshold-based strategies for autoregressive
source}\label{sec:proof_AR-Theorem}

\subsection{A change of variables}

Define a process $\{Z_t\}_{t \ge 0}$ as follows: $Z_0 = 0$ and for $t \ge 0$,
\[
  Z_{t} = \begin{cases}
    a Z_{t-1}, & \mbox{if $Y_t \in \{\BLANK_0, \BLANK_1\}$} \\
    Y_t, & \mbox{if $Y_t \in \ALPHABET X$} 
  \end{cases}
\]

Note that $Z_t$ is a function of $Y_{0:t-1}$. 
Next, define processes $\{E_t\}_{t \ge 0}$, $\{E^+_t\}_{t \ge 0}$, and $\{\hat
E_t\}_{t \ge 0}$ as follows:
\[
  E_t \DEFINED X_t - a Z_{t-1}, \quad
  E^+_t \DEFINED X_t - Z_t, \quad
  \hat E_t \DEFINED \hat X_t - Z_t
\]
The processes $\{E_t\}_{t \ge 0}$ and $\{ E^+_t \}_{t \ge 0}$ are related
as follows: $E_0 = 0$, $E^+_0 = 0$, and for $t \ge 0$
\begin{align*}
  E^+_t &= \begin{cases}
    E_t, & \mbox{if $Y_t \in \{\BLANK_0, \BLANK_1\}$} \\
    0  , & \mbox{if $Y_t \in \ALPHABET X$}
  \end{cases}
  \shortintertext{and}
  E_{t+1} &= a E^+_t + W_t.
\end{align*}

Since $X_t - \hat X_t = E^+_t - \hat E_t$, we have that $d(X_t - \hat X_t) =
d(E^+_t - \hat E_t)$.

It turns out that it is easier to work with the processes $\{E_t\}_{t \ge
0}$, $\{ E^+_t \}_{t \ge 0}$, and $\{ \hat E_t \}_{t \ge 0}$ rather than
$\{X_t\}_{t \ge 0}$ and $\{\hat X_t \}_{t \ge 0}$. 

Next, redefine the pre- and post-transmission beliefs in terms of the error
process. With a slight abuse of notation, we still denote the (probability
density) of the pre- and post-transmission beliefs as $\pi^1_t$ and
$\pi^2_t$. In particular, $\pi^1_t$ is the conditional pdf of $E_t$ given
$(s_{0:t-1}, y_{0:t-1})$ and $\pi^2_t$ is the conditional pdf of $E^+_t$ given
$(s_{0:t}, y_{0:t})$. 

Let $H_t \in \{\BLANK_0, \BLANK_1, 1\}$ denote the event whether the
transmission was successful or not. In particular,
\[
  H_t = \begin{cases}
    \BLANK_0, & \mbox{if $Y_t = \BLANK_0$} \\
    \BLANK_1, & \mbox{if $Y_t = \BLANK_1$} \\
    1, & \mbox{if $Y_t \in \reals$}.
  \end{cases}
\]
We use $h_t$ to denote the realization of $H_t$. Note that $H_t$ is a
deterministic function of $U_t$ and $S_t$. 

The time-evolutions of $\pi^1_t$ and $\pi^2_t$ is similar to
Lemma~\ref{lemma:F1-F2}. In particular, we have
\begin{lemma} \label{lemma:F1-F2a}
  Given any transmission strategy $\VEC f$ of the form~\eqref{eq:tx-1}:
  \begin{enumerate} 
    \item there exists a function $F^1$ such that
      \begin{equation} \label{eq:update-2a}
        \pi^1_{t+1} = F^1(\pi^2_t).
      \end{equation}
      In particular,
      \begin{equation}\label{eq:F1-h}
        \pi^1_{t+1} = \begin{cases}
                      \tilde \pi^2_t \star \mu, & \mbox{if $y_t \in \{\BLANK_0,\BLANK_1\}$}\\
                      \mu, & \mbox{if $y_t \in \reals$},
                      \end{cases}
      \end{equation}
      where $\tilde \pi^2_t$ given by $ \tilde \pi^2_t(e) \DEFINED (1/|a|) \pi^2_t(e/a)$ is the conditional probability density of $aE^+_t$, $\mu$ is the probability density function of $W_t$ and $\star$ is the convolution operation.

    \item there exists a function $F^2$ such that
      \begin{equation} \label{eq:update-1a}
        \pi^2_t = F^2(\pi^1_t, \varphi_t, h_t).
      \end{equation}
      In particular,
      \begin{equation}\label{eq:F2-h}
        \pi^2_t = \begin{cases}
          \delta_{0}, & \mbox{if $h_t = 1$} \\
          \pi^1_t|_{\varphi_t}, & \mbox{if $h_t = \BLANK_1$} \\
          \pi^1_t, & \mbox{if $h_t = \BLANK_0$}.
        \end{cases}
      \end{equation}
  \end{enumerate}
\end{lemma}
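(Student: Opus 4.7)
\textbf{Proof plan for Lemma~\ref{lemma:F1-F2a}.} The argument parallels the proof of Lemma~\ref{lemma:F1-F2}, but works in the error coordinates $(E_t, E^+_t)$ rather than in the source coordinate $X_t$, and uses convolution in place of multiplication by the transition kernel $P$ (since the source is now continuous). A key preliminary observation is that $Z_t$ is a deterministic function of $Y_{0:t-1}$, and hence is common knowledge to transmitter and receiver; therefore conditioning on $(S_{0:t}, Y_{0:t})$ is equivalent to conditioning on $(S_{0:t}, Y_{0:t}, Z_{0:t})$, and the transmission rule of the form~\eqref{eq:tx-2} can, with the stated abuse of notation, be regarded as a map $\varphi_t$ on the error space via $\varphi_t(e) \DEFINED f_t(e + aZ_{t-1}, s_{0:t-1}, y_{0:t-1})$.

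I would establish part~(2) first, by case analysis on $h_t$. If $h_t = 1$, the packet is delivered, so $Y_t = X_t$ and hence $Z_t = X_t$; this forces $E^+_t = X_t - Z_t = 0$ deterministically, giving $\pi^2_t = \delta_0$. If $h_t = \BLANK_0$, the event $S_t = 0$ is uninformative about $X_t$, and $Z_t = aZ_{t-1}$, so $E^+_t = E_t$ and hence $\pi^2_t = \pi^1_t$. If $h_t = \BLANK_1$, receiving $\BLANK_1$ tells the receiver that the sensor chose not to transmit, i.e.\ $\varphi_t(X_t) = 0$, equivalently $E_t \in B_0(\varphi_t)$; again $Z_t = aZ_{t-1}$ so $E^+_t = E_t$, and Bayes' rule, mirroring the calculation at~\eqref{eq:pi-2-2}--\eqref{eq:pi-2-3b} but with the discrete probabilities replaced by densities, yields $\pi^2_t(e) = \IND_{\{\varphi_t(e) = 0\}}\pi^1_t(e)/\pi^1_t(B_0(\varphi_t)) = \pi^1_t|_{\varphi_t}(e)$.

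For part~(1), the recursion $E_{t+1} = a E^+_t + W_t$ together with the fact that $W_t$ is independent of $(S_{0:t}, Y_{0:t})$ and has density $\mu$ implies that the conditional density of $E_{t+1}$ given $(s_{0:t}, y_{0:t})$ is the convolution of the conditional density of $aE^+_t$ with $\mu$; a standard change of variables gives the density of $aE^+_t$ as $\tilde\pi^2_t(e) = (1/|a|)\pi^2_t(e/a)$, so $\pi^1_{t+1} = \tilde\pi^2_t \star \mu$. The second sub-case of~\eqref{eq:F1-h} is then immediate: when $y_t \in \reals$, part~(2) gives $\pi^2_t = \delta_0$, hence $\tilde\pi^2_t = \delta_0$ and the convolution collapses to $\mu$. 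The only conceptual obstacle beyond Lemma~\ref{lemma:F1-F2} is verifying that the change of variables $E_t = X_t - aZ_{t-1}$, $E^+_t = X_t - Z_t$ is a deterministic translation by a quantity already in the common information, so that the error-space posterior densities are well-defined push-forwards of the source-space posteriors and the restriction and convolution operations commute with these translations; once this is noted, the remainder of the proof is a straightforward translation of the discrete argument.
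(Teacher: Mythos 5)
Your proof is correct and follows exactly the route the paper has in mind: it gives no separate proof for this lemma, merely remarking that the time-evolutions are ``similar to Lemma~\ref{lemma:F1-F2},'' and you have filled in that analogy in error coordinates, with the case analysis on $h_t$, the Bayes computation for $h_t = \BLANK_1$, and the convolution step for $F^1$ (together with the observation that $Z_t$ is common information so that $\varphi_t$ is well-defined on the error space) all matching what the paper implicitly relies on.
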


The key difference between Lemmas~\ref{lemma:F1-F2} and~\ref{lemma:F1-F2a}
(and the reason that we work with the error process $\{E_t\}_{t \ge 0}$
rather than $\{X_t\}_{t \ge 0}$) is that the  function $F^2$
in~\eqref{eq:update-1a} depends on $h_t$ rather than $y_t$. Consequently, the
dynamic program of Theorem~\ref{thm:structure} is now given by
  \begin{gather}
    V^1_{T+1}(s, \pi^1) = 0, \label{eq:dp-0a}\\
    \intertext{and for $t \in \{T, \dots, 0\}$}
    \begin{lgathered}[\hsize]
      V^1_t(s, \pi^1) = \min_{\varphi \colon \reals \to \{0, 1\}} 
      \Big\{ \lambda \pi^1(B_1(\varphi))
        \\
        \quad {} + 
        W^0_t(\pi^1, \varphi) \pi^1(B_0(\varphi)) 
        +
        W^1_t(\pi^1, \varphi) \pi^1(B_1(\varphi)) 
      \Big\}
    \end{lgathered}
    \label{eq:dp-1a}
    \\
    V^2_t(s, \pi^2) = D(\pi^2)
    + V^1_{t+1}(s, F^1(\pi^2) ),
    \label{eq:dp-2a}
  \end{gather}
 where, 
\begin{align*}
  W^0_t(\pi^1, \varphi) &= 
  Q_{s0} V^2_t(0, \pi^1)
  +
  Q_{s1} V^2_t(1, \pi^1|_\varphi ),
  \\
  W^1_t(\pi^1, \varphi) &= 
  Q_{s0} V^2_t(0, \pi^1)
  +
  Q_{s1} V^2_t(1, \delta_0),
  \\
  D(\pi^2) &= \min_{\hat e \in \reals}
  \int_{\reals} d(e - \hat e) \pi^2(e) de.
\end{align*}

Again, note that due to the change of variables, the expression for $W^1_t$
does not depend on the transmitted symbol. Consequently, the expression for
$V^1_t$ is simpler than that in Theorem~\ref{thm:structure}.

\subsection{Symmetric unimodal distributions and their properties}\label{sec:ASU-dist-prop}

A probability density function $\pi$ on reals is said to be \emph{symmetric
and unimodal ($\ASU$)} around $c \in \reals$ if for any $x \in \reals$,
$\pi(c-x) = \pi(c+x)$ and $\pi$ is non-decreasing in the interval $(-\infty,
c]$ and non-increasing in the interval $[c, \infty)$. 

Given $c \in \reals$, a prescription $\varphi \colon \reals \to \{0, 1\}$
is called \emph{threshold based around $c$} if there exists $k \in
\reals$ such that
\[
  \varphi(e) = \begin{cases} 
    1, & \mbox{if $|e - c| \ge k$} \\
    0, & \mbox{if $|e - c| < k$}.
  \end{cases}
\]
Let $\mathcal F(c)$ denote the family of all threshold-based prescription
around~$c$.

Now, we state some properties of symmetric and unimodal distributions..

\begin{property}\label{prop:1}
  If $\pi$ is $\ASU(c)$, then
  \[
    c \in \arg \min_{\hat e \in \reals} \int_\reals d(e - \hat e) \pi(e) de.
  \]
\end{property}
For $c=0$, the above property is a special case of~\cite[Lemma
12]{LipsaMartins:2011}. The result for general $c$ follows from a change of
variables.

\begin{property}\label{prop:2}
  If $\pi^1$ is $\ASU(0)$ and $\varphi \in \mathcal F(0)$, then for any $h
  \in \{\BLANK_0, \BLANK_1, 1\}$, $F^2(\pi^1, \varphi, h)$ is $\ASU(0)$. 
\end{property}
\begin{proof}
  We prove the result for each $h \in \{\BLANK_0, \BLANK_1, 1 \}$ separately.
  Recall the update of $\pi^1$ given by~\eqref{eq:F2-h}. For $h_t = \BLANK_0$,
  $\pi^2 = \pi^1$ and hence $\pi^2$ is $\ASU(0)$. For $h_t = \BLANK_1$, $\pi^2
  = \pi^1|_{\varphi}$; if $\varphi \in \mathcal F(0)$, then $\pi^1(x) \IND_{\{
  \varphi(x) = 0 \}}$ is $\ASU(0)$ and hence $\pi^1$ is $\ASU(0)$. For $h_t =
  1$, $\pi^2 = \delta_0$, which is $\ASU(0)$. 
\end{proof}

\begin{property}\label{prop:3}
  If $\pi^2$ is $\ASU(0)$, then $F^1(\pi^2)$ is also $\ASU(0)$.
\end{property}
\begin{proof}
  Recall that $F^1$ is given by~\eqref{eq:F1-h}. The property follows from
  the fact that convolution of symmetric and unimodal distributions is
  symmetric and unimodal.
\end{proof}

\subsection{$\ASU$ majorization and its properties}\label{sec:ASU-major-prop}

For any set $\ALPHABET A$, let $\mathcal I_{\ALPHABET A}$ denote its
indicator function, i.e., $\mathcal I_{\ALPHABET A}(x)$ is $1$ if $x \in
\ALPHABET X$, else $0$. 

Let $\ALPHABET A$ be a measurable set of finite Lebesgue measure, its
\emph{symmetric rearrangement} $\ALPHABET A^\sigma$ is the open interval centered
around origin whose Lebesgue measure is same as $\ALPHABET A$.

Given a function $\ell \colon \reals \to \reals$, its super-level set at
level $\rho$, $\rho \in \reals$, is $\{ x \in \reals : \ell(x) > \rho \}$.
The \emph{symmetric decreasing rearrangement} $\ell^\sigma$ of $\ell$ is a
symmetric and decreasing function whose level sets are the same as $\ell$,
i.e., 
\[
  \ell^\sigma(x) = \int_{0}^\infty 
  \mathcal I_{\{z \in \reals : \ell(z) > \rho\}^\sigma}(x) d\rho.
\]

Given two probability density functions $\xi$ and $\pi$ over $\reals$, $\xi$
\emph{majorizes} $\pi$, which is denoted by $\xi \succeq_m \pi$, if for all
$\rho \ge 0$, 
\[
  \int_{|x| \ge \rho} \xi^\sigma(x) dx \ge 
  \int_{|x| \ge \rho} \pi^\sigma(x) dx .
\]

Given two probability density functions $\xi$ and $\pi$ over $\reals$, $\xi$
\emph{SU majorizes} $\pi$, which we denote by $\xi \succeq_a \pi$,
if $\xi$ is $\ASU$ and $\xi$ majorizes $\pi$.

Now, we state some properties of SU majorization
from~\cite{LipsaMartins:2011}.

\begin{property} \label{prop:4}
  For any $\xi \succeq_a \pi$, where $\xi$ is $\ASU(c)$ and for any
  prescription $\varphi$, let $\theta \in \mathcal F(c)$ be a
  threshold-based prescription
  such that
  \[
    \xi(B_i(\theta)) = \pi(B_i(\varphi)), \quad i \in \{0, 1\}.
  \]
  Then,
  \(
    \xi|_\theta \succeq_a \pi|_\varphi.
  \)
  Consequently, for any $h \in \{\BLANK_0, \BLANK_1, 1\}$, 
  \[
    F^2(\xi, \theta, h) \succeq_a F^2(\pi, \varphi, h).
  \]
\end{property}
For $c = 0$, the result follows from~\cite[Lemma 7 and
8]{LipsaMartins:2011}. The result for general $c$ follows from change of
variables.

\begin{property}\label{prop:5}
  For any $\xi \succeq_m \pi$, $F^1(\xi) \succeq_a F^1(\pi)$. 
\end{property}
This follows from~\cite[Lemma 10]{LipsaMartins:2011}.

Recall the definition of $D(\pi^2)$ given after~\eqref{eq:dp-2a}.

\begin{property}\label{prop:6}
  If $\xi \succeq_a \pi$, then
  \[
    D(\pi) \ge D(\pi^\sigma) \ge D(\xi^\sigma) = D(\xi).
  \]
\end{property}
This follows from~\cite[Lemma 11]{LipsaMartins:2011}. 

%\subsection{Proof of Theorem~\ref{thm:threshold}}
\subsection{Qualitative properties of the value function and optimal strategy}

% The proof proceeds in two steps. We first identify a qualitative property of
% the value function. Then, we use that property to prove
% Theorem~\ref{thm:threshold}.

\begin{lemma} \label{lemma:property}
  The value functions~$V^1_t$ and $V^2_t$
  of~\eqref{eq:dp-0a}--\eqref{eq:dp-2a}, satisfy the following property. 
  \begin{enumerate}
    \item [(P1)] For any $i \in \{1,2\}$, $s \in \{0, 1\}$, $t \in \{0,
      \dots, T\}$, and pdfs $\xi^i$ and $\pi^i$ such that $\xi^i \succeq_a
      \pi^i$, we have that $V^i_t(s,\xi^i) \le V^i_t(s,\pi^i)$. 
  \end{enumerate}

  Furthermore, the optimal strategy satisfies the following properties.  For
  any $s \in \{0, 1\}$ and $t \in \{0, \dots, T\}$:
  \begin{enumerate}
    \item[(P2)] if $\pi^1$ is $\ASU(c)$, then there exists a prescription
      $\varphi_t \in \mathcal F(c)$ that is optimal. In general, $\varphi_t$
      depends on $\pi^1$. 
    \item[(P3)] if $\pi^2$ is $\ASU(c)$, then the optimal estimate $\hat E_t$
      is $c$.
  \end{enumerate}
\end{lemma}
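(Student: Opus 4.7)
The plan is to prove all three properties jointly by backward induction on $t$, moving from $t = T+1$ down to $t = 0$. The base case is trivial because $V^1_{T+1} \equiv 0$ satisfies (P1) vacuously. For the inductive step, I would assume (P1) holds for $V^1_{t+1}$ and then prove, in order, (P3) and (P1) for $V^2_t$, followed by (P2) and (P1) for $V^1_t$.

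For $V^2_t$, the argument is almost immediate from Properties~\ref{prop:1}, \ref{prop:5} and~\ref{prop:6}. Property~\ref{prop:1} applied to an $\ASU(c)$ density $\pi^2$ says the inner minimisation in $D(\pi^2)$ is attained at $\hat e = c$, giving (P3). For (P1), given $\xi^2 \succeq_a \pi^2$, Property~\ref{prop:6} gives $D(\xi^2) \le D(\pi^2)$, Property~\ref{prop:5} gives $F^1(\xi^2) \succeq_a F^1(\pi^2)$, and the induction hypothesis on $V^1_{t+1}$ then yields $V^1_{t+1}(s, F^1(\xi^2)) \le V^1_{t+1}(s, F^1(\pi^2))$; adding the two inequalities gives (P1) for $V^2_t$.

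For $V^1_t$, (P2) is the main content. Given $\pi^1$ that is $\ASU(c)$ and an arbitrary prescription $\varphi$, I would invoke Property~\ref{prop:4} with $\xi = \pi = \pi^1$ to produce a threshold prescription $\theta \in \mathcal F(c)$ such that $\pi^1(B_i(\theta)) = \pi^1(B_i(\varphi))$ for $i \in \{0,1\}$ and such that $\pi^1|_\theta \succeq_a \pi^1|_\varphi$. Because $W^1_t(\pi^1,\varphi)$ does not depend on $\varphi$ and because the scalars $\pi^1(B_i(\cdot))$ are matched, replacing $\varphi$ by $\theta$ in~\eqref{eq:dp-1a} changes only the term $Q_{s1} V^2_t(1,\pi^1|_\varphi)$ inside $W^0_t$; by (P1) for $V^2_t$ (just established), this term does not increase. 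Hence the minimum over all $\varphi$ is attained within $\mathcal F(c)$, establishing (P2). For (P1) at level $V^1_t$, let $\varphi^*$ be optimal for $\pi^1$ and apply Property~\ref{prop:4} to $\xi^1 \succeq_a \pi^1$ (with $\xi^1$ being $\ASU(0)$) to obtain $\theta \in \mathcal F(0)$ with $\xi^1(B_i(\theta)) = \pi^1(B_i(\varphi^*))$ and $\xi^1|_\theta \succeq_a \pi^1|_{\varphi^*}$. Evaluating the right-hand side of~\eqref{eq:dp-1a} at $\xi^1,\theta$ and using (P1) for $V^2_t$ in the two places where $V^2_t$ appears (with arguments $(0,\xi^1) \succeq_a (0,\pi^1)$ and $(1,\xi^1|_\theta) \succeq_a (1,\pi^1|_{\varphi^*})$) gives an upper bound that equals $V^1_t(s,\pi^1)$.

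The main obstacle I anticipate is the careful invocation of Property~\ref{prop:4} at the $V^1_t$ step: one needs to exhibit a threshold prescription whose $B_0,B_1$ masses under $\xi^1$ match those of $\varphi^*$ under $\pi^1$, and to ensure that the induction hypotheses are applied at arguments which are genuinely $\ASU$ so that the $\succeq_a$ (and not merely $\succeq_m$) hypothesis of (P1) is satisfied; this is where Properties~\ref{prop:2} and~\ref{prop:3} silently do the work of keeping the beliefs inside the $\ASU$ class as the dynamic program unrolls. Once this bookkeeping is in place, the remaining computations are routine substitutions into~\eqref{eq:dp-1a}--\eqref{eq:dp-2a}.
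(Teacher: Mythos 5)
Your proposal is correct and follows essentially the same backward-induction argument as the paper: (P1) for $V^2_t$ via Properties~\ref{prop:5}--\ref{prop:6} and the induction hypothesis, (P1) for $V^1_t$ via Property~\ref{prop:4} and the matched threshold prescription, with (P2) recovered by specialising to $\xi^1 = \pi^1$ and (P3) from Property~\ref{prop:1}. The only cosmetic difference is that you state the (P2) step explicitly before the (P1) step for $V^1_t$, whereas the paper derives (P2) as a corollary of the (P1) chain of inequalities; the underlying invocation of Property~\ref{prop:4} is the same.
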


\begin{proof}
  We proceed by backward induction. $V^1_{T+1}(s,\pi^1)$ trivially satisfies
  the (P1). This forms the basis of induction. Now assume that $V^1_{t+1}(s,
  \pi^1)$ also satisfies (P1). For $\xi^2 \succeq_a \pi^2$, we have that
  \begin{align}
    V^2_t(s, \pi^2) &= D(\pi^2) + V^1_{t+1}(s, F^1(\pi^2)) \notag \\
    &\stackrel{(a)}\ge D(\xi^2) + V^1_{t+1}(s, F^1(\xi^2)) \notag \\
    &= V^2_t(s, \xi^2),
    \label{eq:induction-2}
  \end{align}
  where $(a)$ follows from Properties~\ref{prop:5} and~\ref{prop:6} and the
  induction hypothesis. Eq.~\ref{eq:induction-2} implies that $V^2_t$ also
  satisfies (P1). 

  Now, consider $\xi^1 \succeq_a \pi^1$. Let $\varphi$ be the optimal
  prescription at $\pi^1$. Let $\theta$ be the threshold-based prescription
  corresponding to $\varphi$ as defined in Property~\ref{prop:3}. By
  construction,
  \[
    \pi^1(B_0(\varphi)) = \xi^1(B_0(\theta))
    \quad\text{and}\quad
    \pi^1(B_1(\varphi)) = \xi^1(B_1(\theta)).
  \]
  Moreover, from Property~\ref{prop:3} and~\eqref{eq:induction-2},
  \[
    W^0_t(\pi^1, \varphi) \ge W^0_t(\xi^1, \theta)
    \quad\text{and}\quad
    W^1_t(\pi^1, \varphi) \ge W^1_t(\xi^1, \theta).
  \]
  Combining the above two equations with~\eqref{eq:dp-1a}, we get
  \begin{align}
    V^1_t(s, \pi^1) &= \lambda \pi^1(B_1(\varphi)) 
    + W^0(\pi^1, \varphi) \pi^1(B_0(\varphi)) \notag \\
    & \quad + W^1(\pi^1, \varphi) \pi^1(B_1(\varphi)) \notag \\
    &\ge
      \lambda \xi^1(B_1(\theta )) 
    + W^0(\xi^1, \theta ) \xi^1(B_0(\theta )) \notag \\
    & \quad + W^1(\xi^1, \theta ) \xi^1(B_0(\theta )) \notag \\
    &\ge V^1_t(s, \xi^1)
    \label{eq:induction-1}
  \end{align}
  where the last inequality follows by minimizing over all $\theta$.
  Eq.~\eqref{eq:induction-1} implies that $V^1_t$ also satisfies (P1).
  Hence, by the principle of induction, (P1) is satisfied for all time.

  The argument in~\eqref{eq:induction-1} also implies (P2). Furthermore, (P3)
  follows from Property~\ref{prop:1}.
\end{proof}

\subsection{Proof of Theorem~\ref{thm:threshold}}
We first prove a weaker version of the structure of optimal transmission
strategies. In particular, there exist threshold functions $\tilde k_t
\colon \{0, 1\} \times \Delta(\reals) \to \reals_{\ge 0}$ such that the
following transmission strategy is optimal:
\begin{equation}
  f_t(X_t, S_{t-1}, \Pi^1_t) = \begin{cases}
    1, & \mbox{if $|X_t - a Z_{t-1}| \ge \tilde k_t(S_{t-1}, \Pi^1_t)$} \\
        0, & \mbox{otherwise}.
      \end{cases}
      \label{eq:tx-b}
\end{equation}
or, equivalently, in terms of the $\{E_t\}_{t \ge 0}$ process:
\begin{equation}
  f_t(E_t, S_{t-1}, \Pi^1_t) = \begin{cases}
    1, & \mbox{if $|E_t| \ge \tilde k_t(S_{t-1}, \Pi^1_t)$} \\
        0, & \mbox{otherwise}.
      \end{cases}
      \label{eq:tx-c}
\end{equation}

We prove~\eqref{eq:tx-c} by induction. Note that $\pi^1_0 = \delta_0$ which
is $\ASU(0)$. Therefore, by (P2), there exists a threshold-based
prescription $\varphi_0 \in \mathcal F(0)$ that is optimal. This forms the
basis of induction. Now assume that until time $t-1$, all prescriptions are
in $\mathcal F(0)$. By Properties~\ref{prop:2} and~\ref{prop:3}, $\Pi^1_t$
is $\ASU(0)$. Therefore, by (P2), there exists a threshold-based
prescription $\varphi_t \in \mathcal F(0)$ that is optimal. This proves the
induction step and, hence, by the principle of induction, threshold-based
prescriptions of the form~\eqref{eq:tx-c} are optimal for all time.
Translating the result back to $\{X_t\}_{t \ge 0}$, we get that
threshold-based prescriptions of the form~\eqref{eq:tx-b} are optimal.

Observe that Properties~\ref{prop:2} and~\ref{prop:3} also imply that for
all~$t$, $\Pi^2_t$ is $\ASU(0)$. Therefore, by Property~\ref{prop:1}, the
optimal estimate $\hat E_t = 0$. Recall that $\hat E_t = \hat X_t - Z_t$.
Thus, $\hat X_t = Z_t$. This proves the first part of
Theorem~\ref{thm:threshold}.

To prove that there exist optimal transmission strategies where the
thresholds do not depend on $\Pi^1_t$, we fix the estimation strategy to be
of the form~\eqref{eq:rx-a} and consider the problem of finding the best
transmission strategy at the sensor. This is a single-agent
(centralized) stochastic control problem and the optimal solution is given
by the following dynamic program:
\begin{align}
  J_{T+1}(e,s) &= 0 \label{eq:dp-J0}\\
  \intertext{and for $t \in \{T, \dots, 0\}$}
  J_t(e,s) &= \min\{ J^0_t(e,s), J^1_t(e,s) \}
\end{align}
where 
\begin{align}
  J^0_t(e,s) &= d(e) + Q_{s0}\EXP_W[J_{t+1}(ae + W,0)] \notag \\
  &\quad 
  + Q_{s1} \EXP_W[J_{t+1}(ae + W, 1) ],
  \\
  J^1_t(e,s) &= \lambda + Q_{s0}d(e) + Q_{s0}\EXP_W[J_{t+1}(ae + W,0)] \notag \\
  &\quad 
  + Q_{s1} \EXP_W[J_{t+1}(W, 1) ],
  \label{eq:dp-J2}
\end{align}

We now use the results of~\cite{JC-AM-OR16} to show that the value function
even and increasing on $\reals_{\ge 0}$ (abbreviated to EI).

The results of~\cite{JC-AM-OR16} rely on stochastic dominance. Given two
probability density functions $\xi$ and $\pi$ over $\reals_{\ge 0}$, $\xi$
\emph{stochastically dominates} $\pi$, which we denote by $\xi \succeq_s
\pi$, if
\[
  \int_{x \ge y} \xi(x) dx \ge \int_{x \ge y} \pi(x) dx, 
  \quad \forall y \in \reals_{\ge 0}.
\]

Now, we show that dynamic program~\eqref{eq:dp-J0}--\eqref{eq:dp-J2}
satisfies conditions (C1)--(C3) of~\cite[Theorem 1]{JC-AM-OR16}. In
particular, we have: Condition (C1) is satisfied because the per-step cost
functions $d(e)$ and $\lambda + Q_{s0}d(e)$ are EI. Condition (C2) is
satisfied because the probability density~$\mu$ of $W_t$ is even, which
implies that for any $e \in \reals_{\ge 0}$, 
\[  
  \int_{w \in \reals} \mu(ae+w) dw = \int_{w \in \reals} \mu(-ae+w) dw.
\]
Now, to check condition (C3), define for $e \in \reals$ and $y \in \reals_{\ge 0}$, 
\begin{align*}
  M^0(y|e) &= \int_{y}^{\infty} \mu(ae + w) dw 
  + \int_{-\infty}^{-y} \mu(ae + w) dw
  \\
  &=  1 - \int_{-y}^y \mu(ae + w) dw ,
  \\
  M^1(y|e) &= \int_{y}^{\infty} \mu(w) dw 
  + \int_{-\infty}^{-y} \mu(w) dw.
\end{align*}
$M^1(y|e)$ does not depend on $e$ and is thus trivially even and increasing
in~$e$. Since $\mu$ is even, $M^0(y|e)$ is even in~$e$. We show that
$M^0(y|e)$ is increasing in $e$ for $e \in \reals_{\ge 0}$ later (see
Lemma~\ref{lemma:M0-inc}).

Since conditions (C1)--(C3) of~\cite[Theorem 1]{JC-AM-OR16} are satisfied,
we have that for any $s \in \{0, 1\}$, $J_t(e,s)$ is even in~$e$ and
increasing for $e \in \reals_{\ge 0}$. Now, observe that
\begin{multline*}
  J^0(e, s) - J^1(e,s) = (1 - Q_{s0}) d(e) + Q_{s1} \EXP_W[ J_{t+1}(ae + W,
  1)] \\
  - \lambda - Q_{s1} \EXP_W[J_{t+1}(W, 1)]
\end{multline*}
which is even in $e$ and increasing in $e \in \reals_{\ge 0}$. Therefore, for any fixed $s \in \{0,1\}$, 
the set $A$ of $e$ in which $J^0_t(e, s) - J^1_t(e,s) \le 0$ is convex and
symmetric around the origin, i.e., a set of the form $[-k_t(s), k_t(s)]$. Thus,
there exist a $k_t(\cdot)$ such that the action $u_t = 0$ is optimal for $e \in
[-k_t(s), k_t(s)]$. This, proves the structure of the optimal transmission
strategy.

\begin{lemma} \label{lemma:M0-inc}
  For any $y \in \reals_{\ge 0}$, $M^0(y|e)$ is increasing in $e$, $e \in
  \reals_{\ge 0}$. 
\end{lemma}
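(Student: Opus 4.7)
The plan is to reduce the claim to the standard fact that, for a symmetric unimodal density, the mass assigned to a window of fixed length is maximized when the window is centred at the origin.

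First, I would perform the change of variable $v = ae + w$ in the defining integral to obtain
\[
  M^0(y|e) \;=\; 1 - \int_{-y}^{y} \mu(ae + w)\,dw \;=\; 1 - \int_{ae-y}^{ae+y} \mu(v)\,dv,
\]
and then set $g(c) \DEFINED \int_{c-y}^{c+y} \mu(v)\,dv$, so that $M^0(y|e) = 1 - g(ae)$. Since $|ae| = |a|\,e$ is non-decreasing in $e$ for $e \ge 0$, it suffices to prove that $g$ is even in $c$ and non-increasing on $\reals_{\ge 0}$; then $g(ae) = g(|a|e)$ is non-increasing in $e$, and therefore $M^0(y|e) = 1 - g(|a|e)$ is non-decreasing in $e$, as required.

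Next, I would verify the two properties of $g$. Evenness follows from the symmetry of $\mu$ via the substitution $v \mapsto -v$:
\[
  g(-c) \;=\; \int_{-c-y}^{-c+y} \mu(v)\,dv \;=\; \int_{c-y}^{c+y} \mu(-u)\,du \;=\; g(c).
\]
For monotonicity on $\reals_{\ge 0}$, I would differentiate under the integral sign to get $g'(c) = \mu(c+y) - \mu(c-y)$. For $c, y \ge 0$ we have $c + y \ge |c - y|$, and since $\mu$ is symmetric and unimodal it is non-increasing in $|\cdot|$; hence $\mu(c+y) \le \mu(c-y)$, so $g'(c) \le 0$.

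The only delicate point—more a matter of care than a genuine obstacle—is the sign of $a$: because the statement allows $a \in \reals$, the evenness of $g$ is what lets us pass from $g(ae)$ to $g(|a|e)$ before invoking monotonicity on $\reals_{\ge 0}$. If $\mu$ is not assumed differentiable, the same monotonicity argument can be carried out directly on the set $[c-y, c+y]$ by noting that shifting the window away from $0$ trades mass in a high-density region near the origin for mass in a lower-density tail, using only that $\mu$ is non-increasing in $|\cdot|$.
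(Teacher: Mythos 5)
Your proof is correct and rests on the same underlying idea as the paper: change variables so the sliding window $[ae-y,\,ae+y]$ appears, differentiate in $e$, and invoke the symmetry and unimodality of $\mu$. The difference is purely organizational---by introducing the even auxiliary function $g(c)=\int_{c-y}^{c+y}\mu(v)\,dv$, writing $M^0(y|e)=1-g(ae)=1-g(|a|e)$, and showing $g'(c)=\mu(c+y)-\mu(c-y)\le 0$ for $c\ge 0$ via the single inequality $c+y\ge|c-y|$, you collapse the paper's four-way case split on the sign of $a$ and the ordering of $y$ versus $|a|e$ into one line. Both arguments are sound; yours is a somewhat cleaner packaging of the same computation.
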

\begin{proof}
To show that
$M^0(y|e)$ is increasing in~$e$ for $e \in \reals_{\ge 0}$, it sufficies to
show that $1 - M^0(y|e) = \int_{-y}^y \mu(ae + w) dw$ is decreasing in~$e$ for
$e \in \reals_{\ge 0}$. Consider a change of variables $x = ae + w$. Then,
\begin{equation}
  1 - M^0(y|e) = \int_{-y}^y \mu(ae + w) dw = 
  \int_{-y - ae }^{y - ae} \mu(x) dx
  \label{eq:int-mu}
\end{equation}
Taking derivative with respect to~$e$, we get that
\begin{equation}
  \frac{ \partial M^0(y|e) } { \partial e}
  = 
  a [ \mu(y - ae) -  \mu(-y - ae) ]
  \label{eq:diff-mu}
\end{equation}
Now consider the following cases:
\begin{itemize}
  \item If $a > 0$ and $y > ae  > 0$, then the right hand side
    of~\eqref{eq:diff-mu} equals $a [ \mu(y - ae) - \mu(y + ae) ]$, which is
    positive.
  \item If $a > 0$ and $ae > y > 0$, then the right hand side
    of~\eqref{eq:diff-mu} equals $a [ \mu(ae - y) - \mu(ae + y)]$, which is
    positive. 
  \item If $a < 0$ and $y > |a|e > 0$, then the right hand side
    of~\eqref{eq:diff-mu} equals $|a|\, [\mu(y - |a|e) - \mu(y + |a|e)]$,
    which is positive.
  \item If $a < 0$ and $|a|e > y > 0$, then the right hand side
    of~\eqref{eq:diff-mu} equals $|a|\, [\mu(|a|e - y) - \mu(|a|e + y)]$,
    which is positive.
\end{itemize}
Thus, in all cases, $M^0(y|e)$ is increasing in $e$, $e \in \reals_{\ge 0}$.
\end{proof}

\section{Conclusion}
In this paper, we studied remote estimation over a Gilbert-Elliot channel
with feedback. We assume that the channel state is observed by the receiver
and fed back to the transmitter with one unit delay. In addition, the
transmitter gets \textsc{ack}/\textsc{nack} feedback for
successful/unsuccessful transmission. Using ideas from team theory, we
establish the structure of optimal transmission and estimation strategies and
identify a dynamic program to determine optimal strategies with that
structure. We then consider first-order autoregressive sources where the
noise process has unimodal and symmetric distribution. Using ideas from
majorization theory, we show that the optimal transmission strategy has a
threshold structure and the optimal estimation strategy is Kalman-like.

A natural question is how to determine the optimal thresholds. For finite
horizon setup, these can be determined using the dynamic program
of~\eqref{eq:dp-J0}--\eqref{eq:dp-J2}. For inifinite horizon setup, we expect
that the optimal threshold will not depend on time. We believe that it should
be possible to evalute the performance of a generic threshold based strategy
using an argument similar to the renewal theory based argument presented
in~\cite{JC_AM_TAC15} for channels without packet drops.

\bibliographystyle{IEEEtran}
\bibliography{IEEEabrv,isit17_ref} 

\end{document}